\newtheorem{theorem}{Theorem}[section]
\newtheorem{lemma}[theorem]{Lemma}
\DeclareMathOperator{\bbC}{\mathbb{C}}
\DeclareMathOperator{\bbE}{\mathbb{E}}
\DeclareMathOperator{\bbZ}{\mathbb{Z}}
\DeclareMathOperator{\A}{\mathcal{A}}
\DeclareMathOperator{\Aut}{\operatorname{Aut}}
\DeclareMathOperator{\B}{\mathcal{B}}
\renewcommand{\H}{\mathcal{H}}
\DeclareMathOperator{\id}{\operatorname{id}}
\DeclareMathOperator{\J}{\mathcal{J}}
\DeclareMathOperator{\K}{\mathcal{K}}
\renewcommand{\O}{\mathcal{O}}
\DeclareMathOperator{\SOT}{\operatorname{SOT}}
\begin{document}
\title[PEP for Crossed Products]{The Pure Extension Property\\ for Discrete Crossed Products}
\author{Vrej Zarikian}
\address{Department of Mathematics, U. S. Naval Academy, Annapolis, MD 21402}
\email{zarikian@usna.edu}
\subjclass[2010]{Primary: 47L65, 46L55, 46L30 Secondary: 46L07}
\keywords{Pure state extension property, crossed product $C^*$-algebra, free action, discrete group}
\begin{abstract}
Let $G$ be a discrete group acting on a unital $C^*$-algebra $\A$ by $*$-automorphisms. In this note, we show that the inclusion $\A \subseteq \A \rtimes_r G$ has the pure extension property (so that every pure state on $\A$ extends uniquely to a pure state on $\A \rtimes_r G$) if and only if $G$ acts freely on $\mathcal{\widehat{A}}$, the spectrum of $\A$. The same characterization holds for the inclusion $\A \subseteq \A \rtimes G$. This generalizes what was already known for $\A$ abelian.
\end{abstract}

\maketitle

\section{Introduction}

Let $\A \subseteq \B$ be a \emph{$C^*$-inclusion}, i.e., an inclusion of unital $C^*$-algebras, with $1_{\A} = 1_{\B}$. We say that $\A \subseteq \B$ has the \emph{pure extension property} (PEP) if every pure state on $\A$ extends uniquely to a (pure) state on $\B$. The PEP was first considered by Kadison and Singer in \cite{KadisonSinger1959}, where they showed that $L^\infty[0,1] \subseteq B(L^2[0,1])$ fails the PEP, and asked whether or not $\ell^\infty \subseteq B(\ell^2)$ has the PEP. The latter question, famously unsolved for over 50 years, was settled affirmatively by Marcus, Spielman, and Srivastava in \cite{MarcusSpielmanSrivastava2015}.\\

Initially, the study of the PEP focused on the case of $\A$ abelian \cite{Anderson1979,ArchboldBunceGregson1982,Batty1982}. More recently, the general case has received attention \cite{BunceChu1998,Archbold1999}. Thanks to these efforts, we have various characterizations of the PEP, and know that it entails significant structural consequences for the inclusion. Very recently, several authors have advanced our understanding of the PEP for specific classes of inclusions \cite{Renault2008,AkemannWassermannWeaver2010,AkemannSherman2012,Popa2014,Pitts2017}.\\

This note continues the aforementioned line of inquiry, by characterizing (in terms of the dynamics) when the inclusion $\A \subseteq \A \rtimes_r G$ (resp. $\A \subseteq \A \rtimes G$) has the PEP. Here $G$ is a discrete group acting on a unital $C^*$-algebra $\A$ by $*$-automorphisms (abbreviated $G \curvearrowright \A$), and $\A \rtimes_r G$ (resp. $\A \rtimes G$) is the resulting reduced (resp. full) crossed product. When needed, $\alpha_g \in \Aut(\A)$ will denote the $*$-automorphism corresponding to $g \in G$.\\

If $\A$ is abelian, then $\A \cong C(X)$, the continuous complex-valued functions on a compact Hausdorff space $X$. In that case, the answer is already known. Indeed, an action $G \curvearrowright C(X)$ by $*$-automorphisms corresponds to an action $G \curvearrowright X$ by homeomorphisms. The inclusion $C(X) \subseteq C(X) \rtimes G$ has the PEP if and only if $G \curvearrowright X$ is free (meaning that $\{x \in X: g \cdot x = x\} = \emptyset$, for all $e \neq g \in G$) \cite[Cor. 6.2]{Batty1982} (see also \cite[Thm. 3.3.7]{Tomiyama1987} and \cite[Prop. 5.11]{Renault2008}).\\

In order to state our result, we remind the reader that the \emph{spectrum} of a unital $C^*$-algebra $\A$ is the set $\mathcal{\widehat{A}}$ of all unitary equivalence classes of non-zero irreducible representations of $\A$, equipped with the final topology induced by the surjection $PS(\A) \to \mathcal{\widehat{A}}:\phi \mapsto [\pi_\phi]$ arising from the GNS construction. An action $G \curvearrowright \A$ by $*$-automorphisms determines an action $G \curvearrowright \mathcal{\widehat{A}}$ by homeomorphisms. Namely $g \cdot [\pi] = [\pi \circ \alpha_g]$, for $g \in G$ and $\pi:\A \to B(\H)$ a non-zero irreducible representation.\\

Our result (Theorem \ref{PEP} below) says that $\A \subseteq \A \rtimes_r G$ (resp. $\A \subseteq \A \rtimes G$) has the PEP if and only if $G \curvearrowright \mathcal{\widehat{A}}$ is free (meaning that $\{[\pi] \in \mathcal{\widehat{A}}: [\pi \circ \alpha_g] = [\pi]\} = \emptyset$, for all $e \neq g \in G$). If $\A = C(X)$, then there is canonical homeomorphism $\mathcal{\widehat{A}} \cong X$, and so we have generalized the abelian case.\\

It should be noted that when moving from the abelian to the general case, many inequivalent notions of a ``free action'' present themselves, each with its own benefits and limitations \cite{Phillips2009}. Theorem \ref{PEP} singles out one of these notions as being harmonious with the PEP.\\

This paper can be regarded as a companion to our earlier paper \cite{Zarikian2017}, with which it shares many techniques. There we determine when the inclusion $\A \subseteq \A \rtimes_r G$ (resp. $\A \subseteq \A \rtimes G$) has a unique conditional expectation. This happens if and only if $G \curvearrowright \A$ is free\footnote{We say that an action $G \curvearrowright \A$ of a discrete group on a unital $C^*$-algebra by $*$-automorphisms is \emph{free} if $\alpha_g \in \Aut(\A)$ has no non-zero \emph{dependent elements}, for all $e \neq g \in G$. That is, if $b \in \A$ and $ba = \alpha_g(a)b$ for all $a \in \A$, then $b = 0$, unless $g = e$.} \cite[Thm. 3.1.2]{Zarikian2017}. Likewise, we determine when the inclusion $\A \subseteq \A \rtimes_r G$ (resp. $\A \subseteq \A \rtimes G$) has a unique \emph{pseudo-expectation}, in the sense of Pitts \cite{Pitts2017}. This happens if and only if $G \curvearrowright \A$ is properly outer\footnote{We say that $G \curvearrowright \A$ is \emph{properly outer} if for all $e \neq g \in G$, the only $\alpha_g$-invariant ideal $\J \subseteq \A$ such that $\alpha_g|_{\J}$ is \emph{quasi-inner} is $\J = \{0\}$.} \cite[Thm. 3.2.2]{Zarikian2017}. For an arbitrary $C^*$-inclusion $\A \subseteq \B$, we have that
\[
    \text{PEP} \implies \text{unique pseudo-expectation} \implies \text{at most one conditional expectation}.
\]
Thus, in passing, we re-prove the known implications
\[
    \text{$G \curvearrowright \mathcal{\widehat{A}}$ is free}
    \implies \text{$G \curvearrowright \A$ is properly outer}
    \implies \text{$G \curvearrowright\A$ is free}.
\]

\section{The Main Result}

In this section we prove our main result, Theorem \ref{PEP}. Before doing so, we will need a few preparatory lemmas. Our first preliminary result will make it slightly easier to determine when $G \curvearrowright \mathcal{\widehat{A}}$ is free.

\begin{lemma} \label{free}
Let $\A$ be a unital $C^*$-algebra and $\alpha \in \Aut(\A)$. Then
\[
    \{[\pi] \in \mathcal{\widehat{A}}: [\pi \circ \alpha] = [\pi]\} = \emptyset
\]
if and only if for every non-zero irreducible representation $\pi:\A \to B(\H)$ and every $T \in B(\H)$,
\[
    T\pi(a) = \pi(\alpha(a))T, ~ a \in \A \implies T = 0.
\]
\end{lemma}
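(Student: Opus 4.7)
The plan is to prove the contrapositive in each direction: $G$ fixes a class $[\pi]$ exactly when there exists a non-zero intertwiner between $\pi$ and $\pi \circ \alpha$, which in the irreducible case is forced by Schur's lemma to be a scalar multiple of a unitary.

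For the easy direction $(\Leftarrow)$, I would suppose $[\pi \circ \alpha] = [\pi]$ for some non-zero irreducible $\pi:\A \to B(\H)$. By definition, there is a unitary $U \in B(\H)$ with $U\pi(a)U^* = \pi(\alpha(a))$ for all $a \in \A$, which rearranges to $U\pi(a) = \pi(\alpha(a))U$. Since $U \neq 0$, the intertwining hypothesis fails.

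For $(\Rightarrow)$ I would assume there is an irreducible $\pi:\A \to B(\H)$ and a non-zero $T \in B(\H)$ with $T\pi(a) = \pi(\alpha(a))T$ for all $a \in \A$, and build a unitary intertwiner. Taking adjoints and replacing $a$ by $a^*$ gives the companion relation $\pi(a)T^* = T^*\pi(\alpha(a))$. Composing these yields
\[
    T^*T\pi(a) = T^*\pi(\alpha(a))T = \pi(a)T^*T,
\]
so $T^*T$ lies in the commutant $\pi(\A)'$. By Schur's lemma, $\pi(\A)' = \bbC I$, hence $T^*T = \lambda I$ for some $\lambda > 0$ (positive because $T \neq 0$). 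The same argument applied to $TT^*$ (using the companion relation first) yields $TT^* = \mu I$, and $\lambda = \mu = \|T\|^2$. Thus $U := \lambda^{-1/2}T$ satisfies $U^*U = UU^* = I$, so $U$ is unitary, and it inherits the intertwining relation $U\pi(a) = \pi(\alpha(a))U$. Therefore $[\pi \circ \alpha] = [\pi]$, contradicting freeness.

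I don't expect any real obstacle here; the whole argument is the standard Schur-type rigidity for irreducible representations, and the only detail worth being careful about is producing a \emph{unitary} (rather than merely an isometry) out of $T$, which is why I want to extract the identity from both $T^*T$ and $TT^*$.
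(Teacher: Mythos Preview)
Your proof is correct and follows essentially the same route as the paper's: both directions are argued contrapositively, and the substantive step---showing that a non-zero intertwiner $T$ must be a scalar multiple of a unitary via $T^*T, TT^* \in \pi(\A)' = \bbC I$---is exactly what the paper does (citing \cite{ChodaKasaharaNakamoto1972} rather than writing it out). Your explicit derivation of the companion relation and the equality $\lambda = \mu$ simply fills in the details the paper leaves to the reference.
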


\begin{proof}
($\Rightarrow$) Let $\pi:\A \to B(\H)$ be a non-zero irreducible representation, and suppose there exists $T \in B(\H)$ such that
\[
    T\pi(a) = \pi(\alpha(a))T, ~ a \in \A.
\]
Arguing as in \cite{ChodaKasaharaNakamoto1972}, we see that
\[
    T^*T = TT^* \in \pi(\A)' = \bbC I.
\]
Thus, either $T = 0$ or there exists a unitary $U \in B(\H)$ such that
\[
    U\pi(a) = \pi(\alpha(a))U, ~ a \in \A.
\]
Since $\{[\pi] \in \mathcal{\widehat{A}}: [\pi \circ \alpha] = [\pi]\} = \emptyset$, we conclude that $T = 0$.

($\Leftarrow$) Conversely, suppose there exists a non-zero irreducible representation $\pi:\A \to B(\H)$ such that $[\pi \circ \alpha] = [\pi]$. Then there exists a unitary $U \in B(\H)$ such that
\[
    \pi(\alpha(a)) = U\pi(a)U^*, ~ a \in \A.
\]
It follows that
\[
    U\pi(a) = \pi(\alpha(a))U, ~ a \in \A.
\]
\end{proof}

Our second preliminary result is a minor variation of \cite[Thm. 4.5]{Wittstock1981}, on decomposing a completely bounded (CB) bimodule map into completely positive (CP) bimodule maps. The main difference is that $\pi$ in Lemma \ref{decomposable} need not be faithful. The proof consists of an elementary reduction to the faithful case, so that \cite[Thm. 4.5]{Wittstock1981} can be invoked.

\begin{lemma} \label{decomposable}
Let $\A \subseteq \B$ be a $C^*$-inclusion, $\pi:\A \to B(\H)$ be a unital $*$-homomorphism, and $\theta:\B \to B(\H)$ be a CB map. Assume that $\theta$ is $\A$-bimodular with respect to $\pi$, meaning that for all $a \in \A$ and $x \in \B$, we have that
\[
    \theta(ax) = \pi(a)\theta(x) \text{ and } \theta(xa) = \theta(x)\pi(a).
\]
Then $\theta = (\theta_1 - \theta_2) + i(\theta_3 - \theta_4)$, where $\theta_j:\B \to B(\H)$ is a CP map which is $\A$-bimodular with respect to $\pi$, for all $1 \leq j \leq 4$.
\end{lemma}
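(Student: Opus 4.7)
The plan is to reduce to the case where $\pi$ is faithful, in which case \cite[Thm. 4.5]{Wittstock1981} applies verbatim. To do so, I would pick any faithful unital $*$-representation $\sigma:\A \to B(\K)$ (e.g., a universal representation of $\A$), form $\widetilde{\H} = \H \oplus \K$, and set $\widetilde{\pi} = \pi \oplus \sigma:\A \to B(\widetilde{\H})$. Then $\widetilde{\pi}$ is faithful, and the orthogonal projection $P \in B(\widetilde{\H})$ onto $\H$ commutes with $\widetilde{\pi}(a)$ for every $a \in \A$; under the identification $PB(\widetilde{\H})P \cong B(\H)$, one has $P\widetilde{\pi}(a)P = \pi(a)$.

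Next, I would lift $\theta$ to $\widetilde{\theta}:\B \to B(\widetilde{\H})$ by $\widetilde{\theta}(x) = \theta(x) \oplus 0$. Using that $P$ commutes with $\widetilde{\pi}(a)$, a direct block-matrix computation shows that $\widetilde{\theta}$ is CB with $\|\widetilde{\theta}\|_{cb} = \|\theta\|_{cb}$ and $\A$-bimodular with respect to $\widetilde{\pi}$. Applying \cite[Thm. 4.5]{Wittstock1981} in this now-faithful setting yields
\[
    \widetilde{\theta} = (\widetilde{\theta}_1 - \widetilde{\theta}_2) + i(\widetilde{\theta}_3 - \widetilde{\theta}_4),
\]
where each $\widetilde{\theta}_j:\B \to B(\widetilde{\H})$ is CP and $\A$-bimodular with respect to $\widetilde{\pi}$.

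Finally, I would define $\theta_j(x) = P\widetilde{\theta}_j(x)P \in B(\H)$. Each $\theta_j$ is CP as a compression of a CP map, and since $\widetilde{\theta}(x) = P\widetilde{\theta}(x)P$, compressing the above decomposition by $P$ on both sides recovers $\theta = (\theta_1 - \theta_2) + i(\theta_3 - \theta_4)$. The only point needing vigilance is that the $\A$-bimodularity of the $\widetilde{\theta}_j$ with respect to $\widetilde{\pi}$ descends to $\A$-bimodularity of the $\theta_j$ with respect to $\pi$; this is exactly where the relation $P\widetilde{\pi}(a) = \widetilde{\pi}(a)P$ is used, giving $\theta_j(ax) = P\widetilde{\pi}(a)\widetilde{\theta}_j(x)P = \pi(a)\theta_j(x)$, and similarly on the right. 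In short, there is no serious obstacle: the entire content of the argument is arranging this commutation (by making $\widetilde{\pi}$ block-diagonal) so that Wittstock's theorem can be applied upstairs and then pulled back by compression.
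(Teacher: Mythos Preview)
Your argument is correct and follows essentially the same strategy as the paper: both make $\pi$ faithful by direct-summing with a faithful representation (the paper uses the inclusion $\A\subseteq\B\subseteq B(\K)$ and sets $\tilde\pi(a)=a\oplus\pi(a)$, $\tilde\theta(x)=x\oplus\theta(x)$ into $B(\K)\oplus B(\H)$, whereas you use an auxiliary $\sigma$ and $\tilde\theta(x)=\theta(x)\oplus 0$ into $B(\H\oplus\K)$), then apply Wittstock's decomposition theorem and project/compress back to $B(\H)$. The only cosmetic difference is that the paper's block-diagonal target lets each $\tilde\theta_j$ split automatically as $\alpha_j\oplus\theta_j$, while you recover $\theta_j$ by compressing with $P$---both are equally valid.
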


\begin{proof}
We may assume that $\B \subseteq B(\K)$, for some Hilbert space $\K$. Define $\tilde{\pi}:\A \to B(\K) \oplus B(\H)$ by the formula
\[
    \tilde{\pi}(a) = a \oplus \pi(a), ~ a \in \A,
\]
and $\tilde{\theta}:\B \to B(\K) \oplus B(\H)$ by the formula
\[
    \tilde{\theta}(x) = x \oplus \theta(x), ~ x \in \B.
\]
Then $\tilde{\pi}$ is a faithful $*$-homomorphism, and $\tilde{\theta}$ is a CB map which is $\A$-bimodular with respect to $\tilde{\pi}$. Since $B(\K) \oplus B(\H)$ is injective, \cite[Thm. 4.5]{Wittstock1981} applies to show that $\tilde{\theta} = (\tilde{\theta}_1 - \tilde{\theta}_2) + i(\tilde{\theta}_3 - \tilde{\theta}_4)$, where $\tilde{\theta}_j:\B \to B(\K) \oplus B(\H)$ is a CP map which is $\A$-bimodular with respect to $\tilde{\pi}$, for all $1 \leq j \leq 4$. For each $1 \leq j \leq 4$, we have that $\tilde{\theta}_j = \alpha_j \oplus \theta_j$, where $\alpha_j:\B \to B(\K)$ and $\theta_j:\B \to B(\H)$ are CP maps. One easily verifies that $\theta_j$ is $\A$-bimodular with respect to $\pi$, for all $1 \leq j \leq 4$, and that $\theta = (\theta_1 - \theta_2) + i(\theta_3 - \theta_4)$.
\end{proof}

Our third and last preliminary result is a bimodule version of \cite[Lemma 5.1.6]{EffrosRuan2000}, on factoring a completely positive (CP) map as a unital completely positive (UCP) map followed by a conjugation. The proof is nearly identical.

\begin{lemma} \label{unitization}
Let $\A \subseteq \B$ be a $C^*$-inclusion, $\pi:\A \to B(\H)$ be a unital $*$-homomorphism, and $\theta:\B \to B(\H)$ be a CP map which is $\A$-bimodular with respect to $\pi$. Then there exists a UCP map $\tilde{\theta}:\B \to B(\H)$ which is $\A$-bimodular with respect to $\pi$, and such that
\[
    \theta(x) = \theta(1)^{1/2}\tilde{\theta}(x)\theta(1)^{1/2}, ~ x \in \B.
\]
In particular, $\tilde{\theta}|_{\A} = \pi$.
\end{lemma}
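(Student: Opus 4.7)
The bimodularity hypothesis applied with $x = 1$ gives $\theta(a) = \pi(a)\theta(1) = \theta(1)\pi(a)$ for $a \in \A$, so $\theta(1) \in \pi(\A)'$. Let $D := \theta(1)^{1/2}$ and let $P$ be the projection onto $\overline{D\H}$; both lie in $\pi(\A)'$. This gives a $\pi$-invariant decomposition $\H = P\H \oplus (I-P)\H$ with $\pi = \pi_1 \oplus \pi_2$ and $D = D|_{P\H} \oplus 0$. The plan is to build $\tilde{\theta}$ as a direct sum $\tilde{\theta}_1 \oplus \tilde{\theta}_2$ adapted to this decomposition.

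On $P\H$, I would take a \emph{minimal} Stinespring dilation $\theta(x) = V^*\sigma(x)V$, where $\sigma:\B \to B(\K)$ is a $*$-representation and $V:\H \to \K$ satisfies $V^*V = \theta(1) = D^2$. Minimality together with the bimodularity of $\theta$ forces the intertwining $\sigma(a)V = V\pi(a)$ for $a \in \A$ (verified by computing $\langle \sigma(a)V\xi, \sigma(y)V\eta\rangle$ and applying $\theta(y^*a) = \theta(y^*)\pi(a)$). Polar-decomposing $V = WD$, where $W:P\H \to \K$ is an isometry extended by zero on $(I-P)\H$, and using $D \in \pi(\A)'$, one gets $\sigma(a)W = W\pi_1(a)$ on $P\H$. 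Setting $\tilde{\theta}_1(x) := W^*\sigma(x)W$ then produces a UCP map $\tilde{\theta}_1:\B \to B(P\H)$ with $\tilde{\theta}_1|_{\A} = W^*W\pi_1 = \pi_1$ and $D\tilde{\theta}_1(x)D = V^*\sigma(x)V = \theta(x)$.

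On $(I-P)\H$, the restricted $*$-homomorphism $\pi_2:\A \to B((I-P)\H)$ is already UCP, and Arveson's extension theorem yields a UCP extension $\tilde{\theta}_2:\B \to B((I-P)\H)$. Set $\tilde{\theta} := \tilde{\theta}_1 \oplus \tilde{\theta}_2$. Then $\tilde{\theta}$ is UCP with $\tilde{\theta}|_{\A} = \pi_1 \oplus \pi_2 = \pi$, and
\[
D\,\tilde{\theta}(x)\,D = D\,\tilde{\theta}_1(x)\,D = \theta(x)
\]
because $D$ annihilates $(I-P)\H$.

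Finally, the $\A$-bimodularity of $\tilde{\theta}$ is automatic: since $\tilde{\theta}$ is UCP and $\tilde{\theta}|_{\A} = \pi$ is a $*$-homomorphism, Choi's multiplicative-domain theorem places $\A$ inside the multiplicative domain of $\tilde{\theta}$, so $\tilde{\theta}(ax) = \pi(a)\tilde{\theta}(x)$ and $\tilde{\theta}(xa) = \tilde{\theta}(x)\pi(a)$ for all $a \in \A$, $x \in \B$. The main technical step is the intertwining $\sigma(a)V = V\pi(a)$ coming from minimality of the Stinespring dilation together with the bimodule structure; everything else reduces to bookkeeping across the decomposition $P\H \oplus (I-P)\H$.
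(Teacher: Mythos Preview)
Your argument is correct. The overall architecture matches the paper's proof: observe $\theta(1)\in\pi(\A)'$, build the normalized map on the support $P$ of $\theta(1)$, and fill in $P^\perp\H$ using an Arveson extension of $\pi|_{P^\perp\H}$. The genuine difference lies in how the ``normalized'' piece on $P\H$ is produced. The paper follows the Effros--Ruan route, showing that the SOT limits
\[
\lim_{n\to\infty}(\theta(1)+1/n)^{-1/2}\theta(x)(\theta(1)+1/n)^{-1/2}
\]
exist via a Douglas-type factorization $\theta(x)^{1/2}=t\,\theta(1)^{1/2}$, and then sets $\tilde\theta(x)$ equal to this limit plus $p^\perp\Phi(x)p^\perp$. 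You instead take a minimal Stinespring dilation $\theta=V^*\sigma(\cdot)V$, derive the intertwining $\sigma(a)V=V\pi(a)$ from minimality and bimodularity, polar-decompose $V=WD$, and set $\tilde\theta_1=W^*\sigma(\cdot)W$. Your approach avoids the SOT-convergence bookkeeping and makes the bimodularity transparent (via the intertwining, or alternatively via the multiplicative-domain argument you give at the end), at the cost of invoking the Stinespring machinery. The paper's approach is more hands-on and stays closer to the classical Effros--Ruan lemma it is patterned on. Both arguments need Arveson extension on the complement, and both land in the block-diagonal subalgebra $B(P\H)\oplus B(P^\perp\H)\subseteq B(\H)$.
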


\begin{proof}
Since $\theta$ is $\A$-bimodular with respect to $\pi$, we have that $\theta(1) \in \pi(\A)'$. We claim that
\[
    \SOT-\lim_{n \to \infty} (\theta(1) + 1/n)^{-1/2}\theta(x)(\theta(1) + 1/n)^{-1/2}
\]
exists for all $x \in \B$. Indeed, if $0 \leq x \leq 1$, then $0 \leq \theta(x) \leq \theta(1)$, which implies $\theta(x)^{1/2} = t\theta(1)^{1/2} = \theta(1)^{1/2}t^*$ for some $t \in B(\H)$. Then
\[
    \SOT-\lim_{n \to \infty} (\theta(1) + 1/n)^{-1/2}\theta(x)^{1/2}
    = \SOT-\lim_{n \to \infty} (\theta(1) + 1/n)^{-1/2}\theta(1)^{1/2}t^* = pt^*,
\]
where $p \in \pi(\A)'$ is the support projection of $\theta(1)$. Likewise
\[
    \SOT-\lim_{n \to \infty} \theta(x)^{1/2}(\theta(1) + 1/n)^{-1/2}
    = \SOT-\lim_{n \to \infty} t\theta(1)^{1/2}(\theta(1) + 1/n)^{-1/2}
    = tp.
\]
It follows that
\[
    \SOT-\lim_{n \to \infty} (\theta(1) + 1/n)^{-1/2}\theta(x)(\theta(1) + 1/n)^{-1/2} = pt^*tp,
\]
which proves the claim. Now for $x \in \B$, define
\[
    \tilde{\theta}(x) = \SOT-\lim_{n \to \infty} (\theta(1) + 1/n)^{-1/2}\theta(x)(\theta(1) + 1/n)^{-1/2} + p^\perp\Phi(x)p^\perp,
\]
where $\Phi:\B \to B(\H)$ is any fixed UCP extension of $\pi$. Then $\tilde{\theta}$ is a UCP map which is $\A$-bimodular with respect to $\pi$, and such that
\[
    \theta(1)^{1/2}\tilde{\theta}(x)\theta(1)^{1/2} = \theta(x), ~ x \in \B.
\]
Indeed, if $0 \leq x \leq 1$, then (keeping the notation from above) we have that
\begin{eqnarray*}
    \theta(1)^{1/2}\tilde{\theta}(x)\theta(1)^{1/2}
    &=& \theta(1)^{1/2}(pt^*tp + p^\perp\Phi(x)p^\perp)\theta(1)^{1/2}\\
    &=& \theta(1)^{1/2}t^*t\theta(1)^{1/2} = \theta(x).
\end{eqnarray*}
\end{proof}

Finally we state and prove the main result.

\begin{theorem} \label{PEP}
Let $G$ be a discrete group acting on a unital $C^*$-algebra $\A$ by $*$-automorphisms. Then the following are equivalent:
\begin{enumerate}
\item[i.] $\A \subseteq \A \rtimes G$ has the PEP;
\item[ii.] $\A \subseteq \A \rtimes_r G$ has the PEP;
\item[iii.] $G \curvearrowright \mathcal{\widehat{A}}$ is free.
\end{enumerate}
\end{theorem}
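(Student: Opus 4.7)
The plan is to prove the cycle $\mathrm{(iii)} \Rightarrow \mathrm{(i)} \Rightarrow \mathrm{(ii)} \Rightarrow \mathrm{(iii)}$. The step $\mathrm{(i)} \Rightarrow \mathrm{(ii)}$ is essentially free: since the canonical quotient $\A \rtimes G \twoheadrightarrow \A \rtimes_r G$ is the identity on $\A$, two distinct state extensions on $\A \rtimes_r G$ pull back to two distinct state extensions on $\A \rtimes G$.

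For $\mathrm{(iii)} \Rightarrow \mathrm{(i)}$, fix a pure state $\phi$ of $\A$ with GNS triple $(\pi,\H,\xi)$. Given any state extension $\tilde\phi$ to $\A \rtimes G$, compressing its GNS representation onto the $\A$-cyclic subspace of the cyclic vector identifies that subspace with $\H$ and yields a UCP map $\Phi:\A \rtimes G \to B(\H)$ with $\Phi|_{\A}=\pi$ and $\tilde\phi(\cdot)=\langle\Phi(\cdot)\xi,\xi\rangle$. Because $\Phi|_{\A}=\pi$ is multiplicative, $\A$ lies in the multiplicative domain of $\Phi$, so $\Phi$ is $\A$-bimodular with respect to $\pi$. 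The crossed product relation $u_g a = \alpha_g(a)u_g$ then gives $\Phi(u_g)\pi(a)=\pi(\alpha_g(a))\Phi(u_g)$, and Lemma~\ref{free} combined with (iii) forces $\Phi(u_g)=0$ for all $g\neq e$. Hence $\Phi$ agrees with $\pi \circ E_G$ on the dense $*$-algebraic crossed product, where $E_G$ is the canonical conditional expectation $\A \rtimes G \to \A$; by continuity $\tilde\phi = \phi \circ E_G$ is the unique extension.

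For $\mathrm{(ii)} \Rightarrow \mathrm{(iii)}$ I would argue by contrapositive. Suppose some $g\neq e$ and irreducible $\pi:\A\to B(\H)$ satisfy $[\pi\circ\alpha_g]=[\pi]$. Lemma~\ref{free} supplies $0\neq T\in B(\H)$ with $T\pi(a)=\pi(\alpha_g(a))T$. Let $E:\A\rtimes_r G\to\A$ be the canonical conditional expectation and $E_g(x):=E(xu_g^*)$ the $g$-th Fourier coefficient, which satisfies $E_g(ax)=aE_g(x)$ and $E_g(xa)=E_g(x)\alpha_g(a)$. Set
\[
    \theta(x) := \pi(E_g(x))T, \quad x \in \A \rtimes_r G.
\]
Using the intertwining property of $T$ one checks that $\theta$ is CB and $\A$-bimodular with respect to $\pi$, with $\theta(u_g)=T\neq 0$. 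Apply Lemma~\ref{decomposable} to decompose $\theta=(\theta_1-\theta_2)+i(\theta_3-\theta_4)$ into CP $\A$-bimodular pieces and Lemma~\ref{unitization} to dilate each to a UCP extension $\tilde\theta_j$ of $\pi$. If every $\tilde\theta_j(u_g)$ vanished, then $\theta_j(u_g)=\theta_j(1)^{1/2}\tilde\theta_j(u_g)\theta_j(1)^{1/2}=0$ for all $j$, contradicting $\theta(u_g)\neq 0$. Pick $\Phi:=\tilde\theta_j$ with $\Phi(u_g)\neq 0$ and a unit vector $\xi$ with $\langle\Phi(u_g)\xi,\xi\rangle\neq 0$ (possible by polarization). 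Then $\tilde\phi(\cdot):=\langle\Phi(\cdot)\xi,\xi\rangle$ is a state extension of the pure state $a\mapsto\langle\pi(a)\xi,\xi\rangle$ which disagrees at $u_g$ with $\phi\circ E$, so PEP on $\A \rtimes_r G$ fails.

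The main obstacle is the construction in $\mathrm{(ii)} \Rightarrow \mathrm{(iii)}$: turning the bare intertwiner $T$ into a UCP extension that does not annihilate $u_g$. The natural bimodular perturbation $\pi(E_g(\cdot))T$ is only CB and generally non-positive, so it cannot directly serve as a state. Lemmas~\ref{decomposable} and~\ref{unitization} together resolve this, promoting a CB bimodular map into four UCP $\A$-extensions at least one of which inherits the nonvanishing behaviour at $u_g$.
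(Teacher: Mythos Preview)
Your proposal is correct and follows essentially the same route as the paper: the same cycle of implications, the same CB bimodular map $\theta(x)=\pi(E(xu_g^{-1}))T$ promoted via Lemmas~\ref{decomposable} and~\ref{unitization} for $\mathrm{(ii)}\Rightarrow\mathrm{(iii)}$, and the same GNS-compression intertwiner argument feeding into Lemma~\ref{free} for $\mathrm{(iii)}\Rightarrow\mathrm{(i)}$. The only cosmetic difference is that you phrase the bimodularity of the compressed UCP map via the multiplicative domain, whereas the paper writes out the intertwining relations for the isometry $V$ explicitly.
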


\begin{proof}
(i $\implies$ ii) The PEP passes to quotients, by \cite[Lemma 3.1]{ArchboldBunceGregson1982}.

(ii $\implies$ iii) Suppose $\A \subseteq \A \rtimes_r G$ has the PEP. Fix $e \neq g \in G$, and assume that $\pi:\A \to B(\H)$ is a non-zero irreducible representation, such that $[\pi \circ \alpha_g] = [\pi]$. Then there exists a unitary $U \in B(\H)$ such that
\[
    \pi(\alpha_g(a)) = U\pi(a)U^*, ~ a \in \A.
\]
Define a CB map $\theta:\A \rtimes_r G \to B(\H)$ by the formula
\[
    \theta(x) = \pi(\bbE(xg^{-1}))U, ~ x \in \A \rtimes_r G,
\]
where $\bbE:\A \rtimes_r G \to \A$ is the canonical conditional expectation. Then for all $a \in \A$ and $x \in \A \rtimes_r G$, we have that
\[
    \theta(ax) = \pi(\bbE(axg^{-1}))U = \pi(a\bbE(xg^{-1}))U = \pi(a)\pi(\bbE(xg^{-1}))U = \pi(a)\theta(x).
\]
Likewise,
\begin{eqnarray*}
    \theta(xa)
    &=& \pi(\bbE(xag^{-1}))U = \pi(\bbE(xg^{-1}gag^{-1}))U\\
    &=& \pi(\bbE(xg^{-1}\alpha_g(a)))U = \pi(\bbE(xg^{-1})\alpha_g(a))U\\
    &=& \pi(\bbE(xg^{-1}))\pi(\alpha_g(a))U = \pi(\bbE(xg^{-1}))U\pi(a)\\
    &=& \theta(x)\pi(a).
\end{eqnarray*}
That is, $\theta$ is $\A$-bimodular with respect to $\pi$. Furthermore, $\theta(g) = U \neq 0$. By Lemma \ref{decomposable}, $\theta = (\theta_1 - \theta_2) + i(\theta_3 - \theta_4)$, where $\theta_j:\A \rtimes_r G \to B(\H)$ is a CP map which is $\A$-bimodular with respect to $\pi$, for $1 \leq j \leq 4$. Without loss of generality, $\theta_1(g) \neq 0$. By Lemma \ref{unitization}, there exists a UCP map $\tilde{\theta}_1:\A \rtimes_r G \to B(\H)$ which is $\A$-bimodular with respect to $\pi$, and such that $\theta_1(x) = \theta_1(1)^{1/2}\tilde{\theta}_1(x)\theta_1(1)^{1/2}$, $x \in \A \rtimes_r G$. Obviously $\tilde{\theta}_1(g) \neq 0$. Let $\xi \in \H$ be a unit vector such that $\langle \tilde{\theta}_1(g)\xi, \xi \rangle \neq 0$. Then $a \mapsto \langle \pi(a)\xi, \xi \rangle$ is a pure state on $\A$ with distinct state extensions $x \mapsto \langle \pi(\bbE(x))\xi, \xi \rangle$ and $x \mapsto \langle \tilde{\theta}_1(x)\xi, \xi \rangle$ on $\A \rtimes_r G$, a contradiction.

(iii $\implies$ i) Suppose $G \curvearrowright \mathcal{\widehat{A}}$ is free. Let $\phi \in PS(\A)$. Then the corresponding GNS representation $\pi_\phi:\A \to B(\H_\phi)$ is a non-zero irreducible representation. Let $\varPhi \in S(\A \rtimes G)$ be an extension of $\phi$, and let $\pi_\varPhi:\A \rtimes G \to B(\H_\varPhi)$ be the resulting GNS representation. For all $a \in \A$, we have that
\[
    \pi_\varPhi(a)V = V\pi_\phi(a),
\]
where $V:\H_\phi \to \H_\varPhi$ is the unique isometry such that
\[
    V\pi_\phi(a)\xi_\phi = \pi_\varPhi(a)\xi_\varPhi, ~ a \in \A.
\]
Taking adjoints,
\[
    V^*\pi_\varPhi(a) = \pi_\phi(a)V^*, ~ a \in \A.
\]
Now fix $e \neq g \in G$. For all $a \in \A$, we have that
\begin{eqnarray*}
    gag^{-1} = \alpha_g(a)
    &\implies& ga = \alpha_g(a)g\\
    &\implies& \pi_\varPhi(g)\pi_\varPhi(a) = \pi_\varPhi(\alpha_g(a))\pi_\varPhi(g)\\
    &\implies& V^*\pi_\varPhi(g)\pi_\varPhi(a)V = V^*\pi_\varPhi(\alpha_g(a))\pi_\varPhi(g)V\\
    &\implies& V^*\pi_\varPhi(g)V\pi_\phi(a) = \pi_\phi(\alpha_g(a))V^*\pi_\varPhi(g)V.
\end{eqnarray*}
Since $\{[\pi] \in \mathcal{\widehat{A}}: [\pi \circ \alpha_g] = [\pi]\} = \emptyset$, we have that $V^*\pi_\varPhi(g)V = 0$, by Lemma \ref{free}. Thus for any $a \in \A$,
\[
    V^*\pi_\varPhi(ag)V = V^*\pi_\varPhi(a)\pi_\varPhi(g)V = \pi_\phi(a)V^*\pi_\varPhi(g)V = 0.
\]
Therefore
\[
    \varPhi(ag) = \langle \pi_\varPhi(ag)\xi_\varPhi, \xi_\varPhi \rangle
    = \langle \pi_\varPhi(ag)V\xi_\phi, V\xi_\phi \rangle
    = \langle V^*\pi_\varPhi(ag)V\xi_\phi, \xi_\phi \rangle = 0.
\]
Since $a \in \A$ and $e \neq g \in G$ were arbitrary, $\varPhi = \phi \circ \tilde{\bbE}$, where $\tilde{\bbE}:\A \rtimes G \to \A$ is the canonical conditional expectation.
\end{proof}

\section{An Example}

In this final section, we use Theorem \ref{PEP} to analyze the inclusion $\O_2 \subseteq \O_2 \rtimes \bbZ_2$, showing that it fails the PEP, although it has many of the features of a PEP inclusion. Here $\O_2$ is the Cuntz algebra, i.e., the universal $C^*$-algebra generated by isometries $s_0, s_1$ satisfying the relation $s_0s_0^* + s_1s_1^* = 1$, and the action of $\bbZ_2 = \{0,1\}$ on $\O_2$ switches the generators. More precisely, $\alpha_0 = \id$ and $\alpha_1$ is the unique $*$-automorphism of $\O_2$ such that $\alpha_1(s_0) = s_1$. This action was studied in detail by Choi and Latr\'{e}moli\`{e}re in \cite{ChoiLatremoliere2012}, and we benefit tremendously from their insights. In particular, they show that $\O_2 \rtimes \bbZ_2 \cong \O_2$ \cite[Thm. 2.1]{ChoiLatremoliere2012}. Thus $\O_2 \rtimes \bbZ_2$ is simple, and we need not distinguish between $\O_2 \rtimes \bbZ_2$ and $\O_2 \rtimes_r \bbZ_2$.\\

By \cite[Prop. 2.2]{ChoiLatremoliere2012}, $\bbZ_2 \curvearrowright \O_2$ is outer (i.e., not inner). In fact, since $\O_2$ is simple, $\bbZ_2 \curvearrowright \O_2$ is properly outer, and therefore free (see \cite[Rmk. 4.1.3]{Zarikian2017}). Also, by \cite[Ex. 5.7]{Izumi2004}, $\bbZ_2 \curvearrowright \O_2$ has the \emph{Rokhlin property}. On the other hand, $\bbZ_2 \curvearrowright \mathcal{\widehat{O}}_2$ is not free. Indeed, as shown in \cite[App. A]{ChoiLatremoliere2012}, there is a non-zero irreducible representation $\pi:\O_2 \to B(L^2[-1,1])$ such that $[\pi \circ \alpha_1] = [\pi]$. Namely, for $f \in L^2[-1,1]$ and $t \in [-1,1]$,
\[
    (\pi(s_0)f)(t) = \sqrt{2}f(2t-1)\chi_{[0,1]}(t)
\]
and
\[
    (\pi(s_1)f)(t) = \sqrt{2}f(2t+1)\chi_{[-1,0]}(t).
\]
Then
\[
    \pi(\alpha_1(x)) = U\pi(x)U^*, ~ x \in \O_2,
\]
where $U \in B(L^2[-1,1])$ is the unitary operator defined by
\[
    (Uf)(t) = f(-t), ~ f \in L^2[-1,1], ~ t \in [-1,1].
\]
(It would be interesting to know if $\bbZ_2 \curvearrowright \mathcal{\widehat{O}}_2$ is \emph{topologically free}, in the sense of \cite{ArchboldSpielberg1993}.)\\

In light of the previous paragraph, we draw the following conclusions:
\begin{itemize}
\item $\O_2 \subseteq \O_2 \rtimes \bbZ_2$ admits a unique conditional expectation \cite[Thm. 3.1.2]{Zarikian2017};
\item $\O_2 \subseteq \O_2 \rtimes \bbZ_2$ admits a unique pseudo-expectation, in the sense of Pitts \cite[Thm. 3.2.2]{Zarikian2017};
\item $\O_2 \subseteq \O_2 \rtimes \bbZ_2$ fails the PEP, by Theorem \ref{PEP}.
\end{itemize}

\end{document}